\documentclass[11pt,a4paper]{amsart}
\usepackage[utf8]{inputenc}
\usepackage[T1]{fontenc}
\usepackage{amsmath, amsthm, amssymb, mathrsfs}
\usepackage{geometry}
\usepackage{hyperref}

\newtheorem{theorem}{Theorem}[section]
\newtheorem{lemma}[theorem]{Lemma}
\newtheorem{proposition}[theorem]{Proposition}
\newtheorem{corollary}[theorem]{Corollary}
\theoremstyle{definition}
\newtheorem{definition}[theorem]{Definition}
\newtheorem{example}[theorem]{Example}
\newtheorem{remark}[theorem]{Remark}

\newcommand{\g}{\mathfrak{g}}
\newcommand{\h}{\mathfrak{h}}
\newcommand{\vol}{\mathrm{vol}}
\newcommand{\R}{\mathbb{R}}
\newcommand{\ad}{\mathrm{ad}}
\newcommand{\tr}{\mathrm{tr}}

\title[Rigidity and Affine Deformations on Contact Lie Algebras]{Rigidity and Affine Deformations of Foliations on Nilpotent and Solvable Contact Lie Algebras}

\author{Cheikh Khoule}
\address{Facult\'e des Sciences et Techniques, Universit\'e Cheikh Anta Diop de Dakar, Dakar, S\'en\'egal}
\email{cheikh.khoule@ucad.edu.sn}

\author{Rafik Khalfi}
\address{Facult\'e des Sciences de Bizerte, Universit\'e de Carthage, Bizerte, Tunisie}
\email{rafik.khalfi@fsb.ucar.tn}

\subjclass[2020]{Primary 53D10; Secondary 17B30, 22E25, 53C12}
\keywords{Contact Lie algebras, affine deformations, foliations, nilpotent Lie algebras, solvable Lie algebras, modular character}

\begin{document}

\begin{abstract}
Let $\g$ be a Lie algebra of dimension $2n+1$, let $\beta$ be a contact form, and let $\alpha$ be a nonzero integrable $1$-form. We study when the affine path $\alpha_t = (1-t)\alpha + t\beta$ consists of contact forms with the orientation determined by $\beta \wedge (d\beta)^n$. We first establish an algebraic criterion: affine deformability is equivalent to the non-negativity of a scalar obtained from the interaction form $\Theta(\alpha, \beta) = \alpha \wedge (d\beta)^n + n\beta \wedge d\alpha \wedge (d\beta)^{n-1}$. For unimodular Lie algebras this scalar is $(n+1)\alpha(Z_\beta)$, where $Z_\beta$ is the Reeb vector. We also derive a formula involving the modular character $\tau(X) = \tr(\ad_X)$ in the non-unimodular case. On nilpotent contact Lie algebras, every integrable $1$-form is closed; using the standard structural properties of nilpotent contact Lie algebras, this implies that every left-invariant foliation is an affine origin for every fixed contact form. An explicit three-dimensional example illustrates that the modular term can yield affine deformability even when $\alpha(Z_\beta) = 0$. We discuss the implications for left-invariant forms on Lie groups and compact quotients. We do not claim a general obstruction example in the non-unimodular case; establishing such an example requires a separate verified construction.
\end{abstract}

\maketitle

\section{Introduction}

The transition between integrable hyperplane distributions and contact structures is a classical theme in contact geometry, closely related to confoliation theory; see Eliashberg and Thurston \cite{EliashbergThurston1998}. Dathe and Khoule \cite{dathe2012deformations} studied affine deformations of codimension-one foliations into contact structures on manifolds. The present paper considers the corresponding left-invariant, algebraic problem on Lie algebras.

Let $\g$ be a real Lie algebra of dimension $2n+1$. A contact form $\beta \in \g^*$ determines a volume form $\vol_\beta = \beta \wedge (d\beta)^n$. A nonzero $1$-form $\alpha$ is integrable if $\alpha \wedge d\alpha = 0$. We ask when $\alpha$ can be joined to $\beta$ by an affine path of contact forms whose contact volume has the orientation prescribed by $\vol_\beta$.

The main algebraic result is a necessary and sufficient criterion in terms of the interaction scalar defined below. In the unimodular case this scalar is determined by the Reeb contraction $\alpha(Z_\beta)$. In the non-unimodular case it includes a term involving the modular character. We then apply the criterion to nilpotent Lie algebras, where integrability forces closedness.

Contact nilpotent Lie algebras and their structure have been studied, among others, by Alvarez, Rodr\'iguez-Vallarte, and Salgado \cite{alvarez2017contact}. Contact structures on Lie groups are also considered by Diatta \cite{diatta2008left}; Diatta and Foreman discuss lattices and five-dimensional contact solvmanifolds in \cite{diatta2015lattices}. These references provide context for our examples, but the classification statements below are deliberately limited to the explicitly computed cases.

\section{The affine criterion}

\subsection{Basic definitions}
Let $\g$ be a real Lie algebra of dimension $2n+1$. We use the Chevalley--Eilenberg convention
\[
    d\alpha(X, Y) = -\alpha([X, Y]), \quad \alpha \in \g^*.
\]
A form $\beta \in \g^*$ is a contact form if $\vol_\beta = \beta \wedge (d\beta)^n \neq 0$. Its Reeb vector $Z_\beta$ is characterized by $\beta(Z_\beta) = 1$ and $i_{Z_\beta} d\beta = 0$.

A nonzero $\alpha \in \g^*$ is integrable if $\alpha \wedge d\alpha = 0$. We write $\Omega^1_{\mathrm{int}}(\g)$ for the set of such forms.

\begin{lemma} \label{lem:dalpha_k}
If $\alpha \in \Omega^1_{\mathrm{int}}(\g)$, then $(d\alpha)^k = 0$ for every integer $k \geq 2$.
\end{lemma}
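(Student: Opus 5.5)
The argument is purely linear-algebraic and happens in the exterior algebra $\Lambda^\bullet \g^*$. Since $\alpha \neq 0$ and $\alpha \wedge d\alpha = 0$, the first step is to invoke the standard division lemma: if a nonzero $1$-form $\alpha$ satisfies $\alpha \wedge \omega = 0$ for a $2$-form $\omega$, then $\omega = \alpha \wedge \gamma$ for some $\gamma \in \g^*$. To justify this, I will complete $\alpha$ to a basis $e^1 = \alpha, e^2, \dots, e^{2n+1}$ of $\g^*$ and expand
\[
\omega = \sum_{j>1} a_j\, e^1\wedge e^j + \sum_{1<i<j} b_{ij}\, e^i \wedge e^j .
\]
Then $\alpha \wedge \omega = \sum_{1<i<j} b_{ij}\, e^1\wedge e^i\wedge e^j$. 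The basis $3$-forms appearing here are linearly independent, so $\alpha \wedge \omega = 0$ forces all $b_{ij}=0$. Hence $\omega = \alpha \wedge \gamma$ with $\gamma = \sum_j a_j e^j$.

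Applying this to $\omega = d\alpha$ gives $d\alpha = \alpha \wedge \gamma$. The second step is to compute
\[
(d\alpha)^2 = \alpha\wedge\gamma\wedge\alpha\wedge\gamma = -\,\alpha\wedge\alpha\wedge\gamma\wedge\gamma = 0,
\]
using that $2$-forms commute and that $\alpha \wedge \alpha = 0$. For $k \geq 2$ we then have $(d\alpha)^k = (d\alpha)^2 \wedge (d\alpha)^{k-2} = 0$, which is the claim of Lemma~\ref{lem:dalpha_k}.

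No real obstacle arises. The only point needing care is the division lemma, which the basis expansion above settles; it uses that $\alpha \neq 0$, which holds by the definition of $\Omega^1_{\mathrm{int}}(\g)$. The Lie algebra structure enters only through the fact that $d\alpha$ is a $2$-form, so the argument holds in any dimension.
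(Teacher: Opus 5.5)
Your proof is correct and follows the paper's argument: write $d\alpha = \alpha\wedge\gamma$ by the division lemma and conclude $(d\alpha)^2 = 0$, so all higher powers vanish. Your basis-expansion proof of the division step fills in what the paper leaves as ``elementary exterior algebra.''
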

\begin{proof}
The condition $\alpha \wedge d\alpha = 0$ implies, by elementary exterior algebra, that $d\alpha = \alpha \wedge \eta$ for some $\eta \in \g^*$. Hence $(d\alpha)^2 = 0$, and therefore all higher powers vanish as well.
\end{proof}

\begin{definition}[Affine origin]
Let $\beta$ be a contact form on $\g$. A nonzero integrable form $\alpha$ is an \emph{affine origin} for $\beta$ if there are smooth functions $C, B : [0, 1] \to \R$ such that
\begin{gather*}
    C(0) = 1, \quad B(0) = 0, \quad C(1) = 0, \quad B(1) = 1, \\
    C(t) > 0, \quad B(t) > 0 \quad (0 < t < 1),
\end{gather*}
and, for $\alpha_t = C(t)\alpha + B(t)\beta$,
\[
    \alpha_t \wedge (d\alpha_t)^n > 0 \quad \text{for all } t \in (0, 1],
\]
where positivity is measured with respect to the orientation defined by $\vol_\beta$. The set of affine origins for $\beta$ is denoted by $\mathcal{A}(\beta)$.
\end{definition}

\begin{definition}[Interaction form and scalar]
For an integrable $\alpha$ and a contact form $\beta$, define
\[
    \Theta(\alpha, \beta) = \alpha \wedge (d\beta)^n + n\beta \wedge d\alpha \wedge (d\beta)^{n-1}.
\]
There is a unique scalar $f_{\alpha, \beta} \in \R$ such that
\[
    \Theta(\alpha, \beta) = f_{\alpha, \beta} \vol_\beta.
\]
\end{definition}

\begin{theorem}[Affine deformability criterion] \label{thm:criterion}
Let $\alpha$ be a nonzero integrable $1$-form and $\beta$ a contact form on a Lie algebra of dimension $2n+1$. For every smooth affine profile $\alpha_t = C(t)\alpha + B(t)\beta$,
\begin{equation} \label{eq:expansion}
    \alpha_t \wedge (d\alpha_t)^n = B(t)^n \left[ B(t) + C(t)f_{\alpha, \beta} \right] \vol_\beta.
\end{equation}
Moreover, $\alpha \in \mathcal{A}(\beta)$ if and only if $f_{\alpha, \beta} \geq 0$.
\end{theorem}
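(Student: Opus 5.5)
The plan is to prove the expansion \eqref{eq:expansion} by a direct exterior-algebra computation, using Lemma~\ref{lem:dalpha_k} and integrability to kill all but a few terms. The equivalence $\alpha \in \mathcal{A}(\beta) \iff f_{\alpha,\beta} \geq 0$ then reduces to a sign analysis of the scalar function $B(t)^n\bigl[B(t) + C(t) f_{\alpha,\beta}\bigr]$ on $(0,1]$. Throughout I assume $n \geq 1$, the only case in which the term $(d\beta)^{n-1}$ in $\Theta$ makes sense.

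First, write $d\alpha_t = C\,d\alpha + B\,d\beta$. Since $2$-forms commute in the exterior algebra, the binomial theorem gives
\[
(d\alpha_t)^n = \sum_{k=0}^n \binom{n}{k} C^k B^{n-k} (d\alpha)^k \wedge (d\beta)^{n-k}.
\]
By Lemma~\ref{lem:dalpha_k} all terms with $k \geq 2$ vanish, leaving
\[
(d\alpha_t)^n = B^n (d\beta)^n + n C B^{n-1}\, d\alpha \wedge (d\beta)^{n-1}.
\]
Wedging with $\alpha_t = C\alpha + B\beta$ produces four terms:
\begin{gather*}
C B^n\, \alpha \wedge (d\beta)^n, \qquad n C^2 B^{n-1}\, \alpha \wedge d\alpha \wedge (d\beta)^{n-1}, \\
B^{n+1}\, \vol_\beta, \qquad n C B^n\, \beta \wedge d\alpha \wedge (d\beta)^{n-1}.
\end{gather*}
The second term vanishes because $\alpha \wedge d\alpha = 0$. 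The first and fourth combine to $C B^n\, \Theta(\alpha,\beta) = C B^n f_{\alpha,\beta}\, \vol_\beta$. This yields \eqref{eq:expansion}. Here $f_{\alpha,\beta}$ is well defined because $\Lambda^{2n+1}\g^*$ is one-dimensional and $\vol_\beta \neq 0$.

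For sufficiency, suppose $f_{\alpha,\beta} \geq 0$ and take the linear profile $C(t) = 1-t$, $B(t) = t$. Then for $t \in (0,1]$ the coefficient is $t^n\bigl[t + (1-t) f_{\alpha,\beta}\bigr] \geq t^{n+1} > 0$, so $\alpha \in \mathcal{A}(\beta)$.

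For necessity, suppose some admissible profile $(C,B)$ exists. On $(0,1)$ we have $B(t)^n > 0$, so positivity forces $B(t) + C(t) f_{\alpha,\beta} > 0$ for all $t \in (0,1)$. Letting $t \to 0^+$ and using continuity with $B(0) = 0$ and $C(0) = 1$ gives $f_{\alpha,\beta} \geq 0$.

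The only delicate point is this limiting step: the strict inequality on the open interval degenerates to a non-strict one in the limit. That is exactly why the criterion reads $f_{\alpha,\beta} \geq 0$ rather than $> 0$. The case $f_{\alpha,\beta} = 0$ is consistent, since the coefficient is then $B^{n+1} > 0$ on $(0,1]$. Everything else is routine bookkeeping of the binomial expansion and of signs relative to the orientation of $\vol_\beta$.
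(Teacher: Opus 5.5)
Your proof is correct and follows essentially the same route as the paper. You expand $(d\alpha_t)^n$ using Lemma~\ref{lem:dalpha_k}, drop the $\alpha\wedge d\alpha$ term, and recognize $\Theta(\alpha,\beta)$ in the surviving cross terms; the linear profile $C(t)=1-t$, $B(t)=t$ gives sufficiency, and a $t\to 0^+$ continuity argument using $B(t)^n>0$ on $(0,1)$ gives necessity, which you state directly where the paper argues by contradiction.
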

\begin{proof}
Since $d\alpha_t = C(t)d\alpha + B(t)d\beta$, Lemma \ref{lem:dalpha_k} gives
\[
    (d\alpha_t)^n = B(t)^n (d\beta)^n + n C(t)B(t)^{n-1} d\alpha \wedge (d\beta)^{n-1}.
\]
Wedging with $C(t)\alpha + B(t)\beta$ and using $\alpha \wedge d\alpha = 0$ proves \eqref{eq:expansion}.

If $f_{\alpha, \beta} \geq 0$, the profile $C(t) = 1-t$, $B(t) = t$ gives $B(t) + C(t)f_{\alpha, \beta} > 0$ for every $t > 0$, so $\alpha \in \mathcal{A}(\beta)$. Conversely, if $f_{\alpha, \beta} < 0$, continuity and $C(0) = 1$, $B(0) = 0$ imply $B(t) + C(t)f_{\alpha, \beta} < 0$ for all sufficiently small positive $t$. This contradicts the required positivity of the contact volume. Thus $f_{\alpha, \beta} \geq 0$ is necessary.
\end{proof}

\begin{remark}
The criterion is pointwise and algebraic. It does not require compactness of a manifold or an integration argument.
\end{remark}

\section{Integrable forms on nilpotent Lie algebras}

\begin{theorem}[Rigidity of nilpotent foliations] \label{thm:rigidity}
Let $\g$ be a finite-dimensional nilpotent Lie algebra. Every integrable $1$-form $\alpha \in \g^*$ is closed.
\end{theorem}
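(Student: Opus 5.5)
The plan is to translate integrability into a statement about the hyperplane $\h = \ker\alpha$, and then use the standard fact that in a nilpotent Lie algebra a subalgebra of codimension one is automatically an ideal. Once $\h$ is known to be an ideal with one-dimensional abelian quotient, closedness of $\alpha$ follows immediately from the Chevalley--Eilenberg formula $d\alpha(X,Y) = -\alpha([X,Y])$.

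\textbf{Step 1: $\h$ is a subalgebra.} As in the proof of Lemma \ref{lem:dalpha_k}, the condition $\alpha\wedge d\alpha = 0$ with $\alpha \neq 0$ gives $d\alpha = \alpha\wedge\eta$ for some $\eta\in\g^*$. For $X,Y\in\h$ we get $d\alpha(X,Y) = \alpha(X)\eta(Y) - \alpha(Y)\eta(X) = 0$. Hence $\alpha([X,Y]) = 0$, so $\h$ is a subalgebra of codimension one.

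\textbf{Step 2: $\h$ is an ideal.} This is the main step, where nilpotency enters. I would prove the normalizer property directly for the proper subalgebra $\h$. Since $\h$ is a subalgebra, each $\ad_H$ with $H\in\h$ induces an endomorphism of the quotient space $\g/\h$. These endomorphisms are nilpotent, because $\ad_H$ is nilpotent on $\g$. The induced maps form a Lie algebra of nilpotent endomorphisms, namely a quotient representation of $\h$. By Engel's theorem there is a nonzero class $x+\h$ annihilated by all of them, that is, $[\h,x]\subset\h$ with $x\notin\h$. In our case $\g/\h$ is one-dimensional, so this step is even simpler: a nilpotent endomorphism of a line is zero, and Engel's theorem is not needed. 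Either way, choosing $x\notin\h$ gives $\g = \h\oplus\R x$ and $[x,\h]\subset\h$, so $\h$ is an ideal.

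\textbf{Step 3: $\alpha$ is closed.} Since $\g/\h$ is a one-dimensional Lie algebra, it is abelian, so $[\g,\g]\subset\h = \ker\alpha$. Therefore $d\alpha(X,Y) = -\alpha([X,Y]) = 0$ for all $X,Y\in\g$. The only point to check carefully is the nilpotency of the induced action on the quotient in Step 2; everything else is elementary linear algebra.
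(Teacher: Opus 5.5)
Your proof is correct and follows the paper's argument: integrability makes $\h=\ker\alpha$ a codimension-one subalgebra, nilpotency makes it an ideal, and then $\alpha$ vanishes on $[\g,\g]$. The only difference is in the ideal step. The paper cites the normalizer property from Humphreys, while you prove it directly by observing that each $\ad_H$ induces a nilpotent, hence zero, endomorphism of the line $\g/\h$. This makes your version self-contained.
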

\begin{proof}
The result is immediate for $\alpha = 0$, so suppose $\alpha \neq 0$ and put $\h = \ker \alpha$. If $X, Y \in \h$, choose $V \in \g$ with $\alpha(V) = 1$. Evaluating $\alpha \wedge d\alpha = 0$ on $(V, X, Y)$ gives $d\alpha(X, Y) = 0$. Thus $\h$ is a codimension-one subalgebra.

Every proper subalgebra of a finite-dimensional nilpotent Lie algebra is strictly contained in its normalizer (the normalizer property; see \cite[Section 3.2]{humphreys1972lie}). Since $\h$ has codimension one, its normalizer is all of $\g$, so $\h$ is an ideal. For $X \in \h$, it follows that $[V, X] \in \h$, hence $d\alpha(V, X) = 0$. Together with $d\alpha(X, Y) = 0$ for $X, Y \in \h$, this proves $d\alpha = 0$.
\end{proof}

\section{The unimodular case and nilpotent contact algebras}

A Lie algebra is unimodular if its modular character $\tau \in \g^*$, defined by $\tau(X) = \tr(\ad_X)$, vanishes identically.

\begin{lemma} \label{lem:unimodular_d}
If $\g$ is unimodular of dimension $2n+1$, then $d\omega = 0$ for every $\omega \in \Lambda^{2n}\g^*$.
\end{lemma}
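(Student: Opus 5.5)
The plan is to use the contraction isomorphism $\Lambda^{2n}\g^* \cong \g$, $X \mapsto i_X \mu$, where $\mu$ is a fixed nonzero element of $\Lambda^{2n+1}\g^*$ (for instance $\mu = \vol_\beta$ in the contact setting, but any volume element works). Every $\omega \in \Lambda^{2n}\g^*$ is then $\omega = i_X\mu$ for a unique $X \in \g$. The claim reduces to computing $d(i_X\mu)$ and showing it equals $-\tau(X)\mu$, or $\tau(X)\mu$ depending on sign conventions. Since $d(i_X\mu)$ is a top-degree form, it is a scalar multiple of $\mu$, and we only need that scalar.

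The key step is a Cartan-type formula for the Chevalley--Eilenberg differential. Define the coadjoint action on forms by $(L_X\omega)(Y_1,\dots,Y_k) = -\sum_i \omega(Y_1,\dots,[X,Y_i],\dots,Y_k)$. I will verify, or cite as standard, the identity $L_X = d\, i_X + i_X d$ on $\Lambda^\bullet\g^*$. With the convention $d\alpha(X,Y) = -\alpha([X,Y])$, this is the usual Cartan formula for left-invariant forms on the associated group. Applying it to $\mu$ and using $d\mu = 0$, since there are no $(2n+2)$-forms, gives $d(i_X\mu) = L_X\mu$.

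Next I compute $L_X\mu$. For a top form, $(L_X\mu)(Y_1,\dots,Y_{2n+1}) = -\sum_i \mu(Y_1,\dots,\ad_X Y_i,\dots,Y_{2n+1})$, and this equals $-\tr(\ad_X)\,\mu(Y_1,\dots,Y_{2n+1})$. The equality follows from the standard fact that a derivation-style sum of a linear endomorphism over a determinant gives the trace, which one checks on a basis. Hence $d(i_X\mu) = -\tau(X)\mu$. Under unimodularity $\tau \equiv 0$, so $d\omega = 0$ for every $\omega \in \Lambda^{2n}\g^*$.

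The main obstacle is bookkeeping of signs and conventions in the Cartan formula for the algebraic differential. However, the sign is irrelevant for this lemma, since we only need vanishing when $\tau = 0$. If one prefers to avoid the Cartan formula, a direct alternative works. Take a basis $e_0,\dots,e_{2n}$ with dual basis $e^i$ and $\mu = e^0\wedge\cdots\wedge e^{2n}$, and compute $d(e^0\wedge\cdots\widehat{e^i}\cdots\wedge e^{2n})$ via the Leibniz rule. Only the $e^i$-component of each $de^j$ with $j\neq i$ survives, and $de^j$ has $e^i\wedge e^j$-coefficient $\pm c^j_{ij}$. Summing over $j$ gives $\pm\tr(\ad_{e_i})\mu$, which again vanishes in the unimodular case.
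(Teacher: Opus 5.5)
Your proposal is correct and follows the paper's proof: write $\omega = i_X\Omega$ for a fixed volume form $\Omega$, then use Cartan's formula together with $d\Omega = 0$ to get $d(i_X\Omega) = \mathcal{L}_X\Omega = -\tr(\ad_X)\Omega$, which vanishes by unimodularity. Your alternative basis computation is also sound: the signs of the Leibniz terms agree, so they sum to $\pm\tr(\ad_{e_i})$ as you claim, and it gives an independent check of the same identity.
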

\begin{proof}
Fix a nonzero top-degree form $\Omega$. Every $2n$-form has the form $i_X \Omega$ for a unique $X \in \g$. Cartan's formula gives $d(i_X \Omega) = \mathcal{L}_X \Omega = -\tr(\ad_X)\Omega = 0$, using the Chevalley--Eilenberg convention fixed above.
\end{proof}

\begin{proposition} \label{prop:unimodular_theta}
Let $\g$ be a unimodular contact Lie algebra with contact form $\beta$. For every integrable $\alpha$,
\[
    \Theta(\alpha, \beta) = (n+1)\alpha \wedge (d\beta)^n = (n+1)\alpha(Z_\beta)\vol_\beta.
\]
In particular, $f_{\alpha, \beta} = (n+1)\alpha(Z_\beta)$.
\end{proposition}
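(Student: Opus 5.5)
The plan has two steps: convert the $d\alpha$ term of $\Theta(\alpha,\beta)$ into an $\alpha$ term using Lemma \ref{lem:unimodular_d}, and then evaluate $\alpha\wedge(d\beta)^n$ by contracting with the Reeb vector.

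\emph{Step 1: removing $d\alpha$.} Apply Lemma \ref{lem:unimodular_d} to the $2n$-form $\omega=\alpha\wedge\beta\wedge(d\beta)^{n-1}$. By the graded Leibniz rule, and since $d((d\beta)^{n-1})=0$,
\[
d\omega = d\alpha\wedge\beta\wedge(d\beta)^{n-1} - \alpha\wedge d\beta\wedge(d\beta)^{n-1}.
\]
The $2$-form $d\alpha$ commutes with $\beta$, so $d\alpha\wedge\beta\wedge(d\beta)^{n-1}=\beta\wedge d\alpha\wedge(d\beta)^{n-1}$. Since $d\omega=0$, this gives
\[
\beta\wedge d\alpha\wedge(d\beta)^{n-1}=\alpha\wedge(d\beta)^n .
\]
Substituting into the definition of $\Theta$ yields $\Theta(\alpha,\beta)=(n+1)\,\alpha\wedge(d\beta)^n$. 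This step uses no integrability; it only needs $n\ge1$. The main point to check is the sign bookkeeping in the Leibniz rule.

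\emph{Step 2: the Reeb identity.} We show $\alpha\wedge(d\beta)^n=\alpha(Z_\beta)\vol_\beta$. The form $\alpha\wedge(d\beta)^n$ has top degree, so it equals $c\,\vol_\beta$ for a unique scalar $c$; it suffices to determine $c$ by contracting both sides with $Z_\beta$. Because $i_{Z_\beta}d\beta=0$, every term of $i_{Z_\beta}$ in which $Z_\beta$ hits a factor $d\beta$ vanishes. Hence
\[
i_{Z_\beta}\bigl(\alpha\wedge(d\beta)^n\bigr)=\alpha(Z_\beta)(d\beta)^n,
\qquad
i_{Z_\beta}\vol_\beta=\beta(Z_\beta)(d\beta)^n=(d\beta)^n .
\]
The form $(d\beta)^n$ is nonzero because $\beta$ is contact. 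Contraction with the nonzero vector $Z_\beta$ is injective on top-degree forms, so comparing the two contractions gives $c=\alpha(Z_\beta)$.

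Combining the two steps gives $\Theta(\alpha,\beta)=(n+1)\alpha(Z_\beta)\vol_\beta$, and by the definition of $f_{\alpha,\beta}$ this means $f_{\alpha,\beta}=(n+1)\alpha(Z_\beta)$. The only genuine input is unimodularity, used in Step 1 through Lemma \ref{lem:unimodular_d}. Integrability of $\alpha$ is needed only so that $f_{\alpha,\beta}$ is defined as in the paper.
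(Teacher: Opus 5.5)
Your proof is correct and follows the paper's argument. The paper applies Lemma \ref{lem:unimodular_d} to $\omega=\alpha\wedge\beta\wedge(d\beta)^{n-1}$ and uses the Leibniz rule to trade $\beta\wedge d\alpha\wedge(d\beta)^{n-1}$ for $\alpha\wedge(d\beta)^n$; it then obtains $\alpha\wedge(d\beta)^n=\alpha(Z_\beta)\vol_\beta$ from $i_{Z_\beta}d\beta=0$ and $\beta(Z_\beta)=1$, and your contraction argument just writes out that last step, which the paper states in one line.
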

\begin{proof}
Apply Lemma \ref{lem:unimodular_d} to $\omega = \alpha \wedge \beta \wedge (d\beta)^{n-1}$. The Leibniz rule gives
\[
    0 = d\omega = \beta \wedge d\alpha \wedge (d\beta)^{n-1} - \alpha \wedge (d\beta)^n.
\]
Substitution in the definition of $\Theta$ yields the first equality. The second follows from $i_{Z_\beta} d\beta = 0$ and $\beta(Z_\beta) = 1$.
\end{proof}

\begin{theorem}[Affine deformability on nilpotent contact Lie algebras] \label{thm:nilpotent_deform}
Let $\g$ be a nilpotent contact Lie algebra with contact form $\beta$. Assume the standard structural property that the Reeb vector $Z_\beta$ belongs to $[\g, \g]$ (in particular, this follows from the usual structure theorem for nilpotent contact Lie algebras; see \cite{alvarez2017contact}). Then every nonzero integrable $1$-form $\alpha$ is an affine origin for $\beta$.
\end{theorem}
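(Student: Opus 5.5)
The plan is to reduce everything to the criterion of Theorem \ref{thm:criterion}: it suffices to show $f_{\alpha,\beta}\ge 0$. I will in fact show the stronger statement $f_{\alpha,\beta}=0$, by combining the three earlier results in the following order.

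First, a nilpotent Lie algebra is unimodular. Indeed, each $\ad_X$ is nilpotent by Engel's theorem, so $\tr(\ad_X)=0$ and the modular character $\tau$ vanishes. Proposition \ref{prop:unimodular_theta} therefore applies and gives
\[
f_{\alpha,\beta}=(n+1)\,\alpha(Z_\beta).
\]
Second, Theorem \ref{thm:rigidity} shows that any nonzero integrable $\alpha$ is closed. For all $X,Y\in\g$ this means $\alpha([X,Y])=-d\alpha(X,Y)=0$, so $\alpha$ vanishes on $[\g,\g]$.

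Third, I use the assumed structural property $Z_\beta\in[\g,\g]$. Together with the second step it gives $\alpha(Z_\beta)=0$, hence $f_{\alpha,\beta}=0\ge 0$. Theorem \ref{thm:criterion} then yields $\alpha\in\mathcal{A}(\beta)$. Concretely, with the profile $C(t)=1-t$, $B(t)=t$, formula \eqref{eq:expansion} becomes $\alpha_t\wedge(d\alpha_t)^n=t^{n+1}\vol_\beta$, which is positive for $t\in(0,1]$.

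The only step with real content is the structural input $Z_\beta\in[\g,\g]$, and the statement takes it as a hypothesis. If I wanted to justify it independently, I would argue by contradiction. Suppose $Z_\beta\notin[\g,\g]$. Then some closed $1$-form $\gamma$ satisfies $\gamma(Z_\beta)\neq 0$, and by Proposition \ref{prop:unimodular_theta} (whose proof only uses $\alpha\wedge d\alpha=0$, which holds for closed $\gamma$) we get $\gamma\wedge(d\beta)^n\neq 0$. Nilpotency should then be exploited through the center: a contact nilpotent algebra has one-dimensional center $\mathfrak z$ with $\beta|_{\mathfrak z}\neq 0$. The goal would be to show that $Z_\beta$ spans $\mathfrak z$, which lies in $[\g,\g]$ for non-abelian nilpotent $\g$ of dimension at least $3$. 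That would be the main obstacle if one did not simply cite \cite{alvarez2017contact}. Since the theorem explicitly assumes the property, I would just invoke it.
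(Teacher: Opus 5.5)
Your proof is correct and follows the paper's argument: Theorem~\ref{thm:rigidity} makes $\alpha$ closed, so $\alpha$ vanishes on $[\g,\g]$ and hence $\alpha(Z_\beta)=0$; unimodularity of nilpotent algebras and Proposition~\ref{prop:unimodular_theta} then give $f_{\alpha,\beta}=0$, and Theorem~\ref{thm:criterion} finishes. The structural input you flag as the main obstacle is actually short: a central $z$ satisfies $i_z d\beta=0$, so the center lies in $\ker d\beta=\R Z_\beta$, and since the nonzero ideal $[\g,\g]$ of a nilpotent algebra meets the center nontrivially, $Z_\beta\in[\g,\g]$ whenever $n\ge 1$.
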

\begin{proof}
By Theorem \ref{thm:rigidity}, $d\alpha = 0$, so $\alpha$ vanishes on $[\g, \g]$. Hence $\alpha(Z_\beta) = 0$. Nilpotent Lie algebras are unimodular, so Proposition \ref{prop:unimodular_theta} gives $f_{\alpha, \beta} = 0$. The criterion in Theorem \ref{thm:criterion} now implies $\alpha \in \mathcal{A}(\beta)$.
\end{proof}

\begin{corollary}
The conclusion of Theorem \ref{thm:nilpotent_deform} applies, in particular, to every filiform contact Lie algebra.
\end{corollary}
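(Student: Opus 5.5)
The plan is to reduce the corollary to Theorem \ref{thm:nilpotent_deform}. That requires two checks for an arbitrary filiform contact Lie algebra $(\g,\beta)$: that $\g$ is nilpotent, and that the Reeb vector satisfies $Z_\beta \in [\g,\g]$. Nilpotency holds by definition, since a filiform Lie algebra is a nilpotent Lie algebra of maximal nilpotency class $\dim\g - 1$. Everything else already established then carries over: Theorem \ref{thm:rigidity} makes every integrable $\alpha$ closed, nilpotent algebras are unimodular, and Proposition \ref{prop:unimodular_theta} with Theorem \ref{thm:criterion} give the conclusion once $\alpha(Z_\beta)=0$ is known.

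The one real point, and the step I expect to be the main obstacle, is the inclusion $Z_\beta \in [\g,\g]$. I would prove it directly rather than only cite the structure theorem. Since $i_{Z_\beta}d\beta = 0$, we have $\beta([Z_\beta,X]) = 0$ for all $X$, so $\ad_{Z_\beta}$ maps $\g$ into $\ker\beta$.

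I would first try the following argument, which works for any nilpotent contact algebra. Suppose $Z_\beta \notin [\g,\g]$. Then there is a closed $1$-form $\gamma$ with $\gamma(Z_\beta)=1$: take $\gamma$ vanishing on $[\g,\g]$, which is possible because $[\g,\g]$ is a proper subspace not containing $Z_\beta$. By the unimodular computation of Lemma \ref{lem:unimodular_d}, now applied to $\omega=\gamma\wedge\beta\wedge(d\beta)^{n-1}$, we get
\[
0 = d\omega = -\gamma\wedge(d\beta)^n,
\]
because $d\gamma = 0$ and $d(d\beta)=0$. But $\gamma\wedge(d\beta)^n = \gamma(Z_\beta)\vol_\beta = \vol_\beta \neq 0$, which is a contradiction. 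Hence $Z_\beta\in[\g,\g]$. In fact this argument needs only unimodularity and works for every nilpotent contact algebra, so the filiform case is a special instance and the appeal to \cite{alvarez2017contact} becomes unnecessary.

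Finally I would double-check the sign conventions: $d(\gamma\wedge\eta)=d\gamma\wedge\eta-\gamma\wedge d\eta$ for the $2n-1$-form $\eta=\beta\wedge(d\beta)^{n-1}$, and $d\eta=(d\beta)^n$. Only the vanishing matters, so signs are harmless. I would then invoke Theorem \ref{thm:nilpotent_deform} to conclude that every nonzero integrable $\alpha$ lies in $\mathcal{A}(\beta)$.
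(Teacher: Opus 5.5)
Your reduction matches the paper's: filiform algebras are nilpotent, so Theorem~\ref{thm:nilpotent_deform} applies. The difference is in the one nontrivial input. The paper does not prove the hypothesis $Z_\beta\in[\g,\g]$; it imports it from the structure theory of nilpotent contact Lie algebras in the cited reference, and a remark flags that this input still needs checking. You derive it internally. Lemma~\ref{lem:unimodular_d}, applied to $\gamma\wedge\beta\wedge(d\beta)^{n-1}$, shows that every closed $1$-form $\gamma$ satisfies $\gamma(Z_\beta)=0$. The closed $1$-forms are exactly the annihilator of $[\g,\g]$, so this forces $Z_\beta\in[\g,\g]$. The argument is correct: the identity $\gamma\wedge(d\beta)^n=\gamma(Z_\beta)\vol_\beta$ and the sign bookkeeping both check out.

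Your route also gives more than the corollary. It shows that the structural hypothesis of Theorem~\ref{thm:nilpotent_deform} is automatic for every unimodular contact Lie algebra with $n\ge 1$. Both the theorem and the corollary then become self-contained, and the external citation is no longer needed. The restriction $n\ge1$ is genuinely required for your aside about ``every nilpotent contact algebra'': for $\g=\R$ with $\beta\neq 0$ one has $Z_\beta\notin[\g,\g]=0$. Filiform algebras have dimension at least $3$, so this does not affect the corollary.

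Finally, the detour through $Z_\beta$ is not strictly needed. Apply your computation directly to the closed form $\alpha$ given by Theorem~\ref{thm:rigidity}: this yields $\Theta(\alpha,\beta)=\alpha\wedge(d\beta)^n=-d\bigl(\alpha\wedge\beta\wedge(d\beta)^{n-1}\bigr)=0$, hence $f_{\alpha,\beta}=0$ immediately.
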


\begin{remark}
The structural input about $Z_\beta$ should be checked against the precise statement and hypotheses of the cited reference in the final bibliography. The proof above isolates exactly where that input is used.
\end{remark}

\section{The modular formula}

\begin{lemma} \label{lem:modular_d}
For a Lie algebra $\g$ of dimension $2n+1$ with modular character $\tau$, every $\omega \in \Lambda^{2n}\g^*$ satisfies
\[
    d\omega = -\tau \wedge \omega.
\]
\end{lemma}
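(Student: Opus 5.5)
The plan is to follow the proof of Lemma \ref{lem:unimodular_d}, keeping track of the trace term this time instead of discarding it. Fix a nonzero top-degree form $\Omega \in \Lambda^{2n+1}\g^*$. Since $\dim \g = 2n+1$, the map $X \mapsto i_X\Omega$ is a linear isomorphism $\g \to \Lambda^{2n}\g^*$. So it is enough to prove the identity for $\omega = i_X\Omega$ with $X \in \g$ arbitrary. Both sides of $d\omega = -\tau\wedge\omega$ are then multiples of $\Omega$, and I will compute each coefficient separately.

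\emph{Left-hand side.} I use the Cartan formula $\mathcal{L}_X = d\, i_X + i_X d$ on $\Lambda^\bullet \g^*$. Here $\mathcal{L}_X$ is the derivation extending the coadjoint action $(\mathcal{L}_X\alpha)(Y) = -\alpha([X,Y])$. This is the formula for left-invariant forms on a Lie group, and it is compatible with the convention $d\alpha(X,Y) = -\alpha([X,Y])$. As a check in degree one: $i_X d\alpha = d\alpha(X,\cdot) = -\alpha([X,\cdot]) = \mathcal{L}_X\alpha$, and $d\, i_X\alpha = 0$ because $i_X\alpha$ is a constant.

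Since $d\Omega = 0$ for degree reasons, we get $d(i_X\Omega) = \mathcal{L}_X\Omega$. On a top-degree form, the derivation property gives
\[
(\mathcal{L}_X\Omega)(Y_1,\dots,Y_{2n+1}) = -\sum_j \Omega(Y_1,\dots,[X,Y_j],\dots,Y_{2n+1}).
\]
The right-hand side equals $-\tr(\ad_X)\,\Omega(Y_1,\dots,Y_{2n+1})$, because a derivation of a top-degree form acts by the trace of the underlying endomorphism. Hence $d\omega = -\tau(X)\Omega$.

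\emph{Right-hand side.} The form $\tau\wedge\Omega$ has degree $2n+2 > \dim\g$, so it vanishes. Contracting with $X$ and using that $i_X$ is an antiderivation gives
\[
0 = i_X(\tau\wedge\Omega) = \tau(X)\Omega - \tau\wedge i_X\Omega.
\]
Therefore $\tau\wedge\omega = \tau(X)\Omega$. Comparing with the left-hand side yields $d\omega = -\tau\wedge\omega$.

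The main obstacle is sign bookkeeping: ensuring that the Cartan formula and the sign of $\mathcal{L}_X\Omega$ agree with the fixed convention $d\alpha(X,Y) = -\alpha([X,Y])$. If the abstract Cartan argument seems suspect, I would verify the identity directly in a basis $e_1,\dots,e_{2n+1}$ with dual basis $e^i$ and structure constants $[e_i,e_j] = c_{ij}^k e_k$. It suffices to compute $d$ of $\omega_i = e^1\wedge\cdots\wedge\widehat{e^i}\wedge\cdots\wedge e^{2n+1}$. Using $de^k = -\sum_{i<j} c_{ij}^k e^i\wedge e^j$, the only terms that survive in $d\omega_i$ are those producing the missing $e^i$. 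Summing them should give $\pm\bigl(\sum_k c_{ik}^k\bigr)\, e^1\wedge\cdots\wedge e^{2n+1}$, and I would check that this sign matches that of $-\tau\wedge\omega_i = -\tau(e_i)\, e^i\wedge\omega_i$.
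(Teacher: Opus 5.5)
Your proposal is correct and follows the paper's proof. Both write $\omega = i_X\Omega$, use Cartan's formula to get $d\omega = \mathcal{L}_X\Omega = -\tau(X)\Omega$, and identify $\tau\wedge i_X\Omega = \tau(X)\Omega$. You also supply the steps the paper leaves implicit (the isomorphism $X\mapsto i_X\Omega$, $d\Omega = 0$, the trace identity for derivations on top-degree forms, and the antiderivation argument from $\tau\wedge\Omega = 0$), with signs consistent with the convention $d\alpha(X,Y) = -\alpha([X,Y])$.
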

\begin{proof}
Write $\omega = i_X \Omega$ for a nonzero volume form $\Omega$. As above, $d\omega = \mathcal{L}_X \Omega = -\tau(X)\Omega$. Since $\tau \wedge i_X \Omega = \tau(X)\Omega$, the stated identity follows.
\end{proof}

\begin{theorem}[General interaction formula] \label{thm:general_interaction}
Let $\beta$ be a contact form on a Lie algebra $\g$ of dimension $2n+1$, and let $\alpha$ be integrable. Then
\begin{equation} \label{eq:theta_modular}
    \Theta(\alpha, \beta) = (n+1)\alpha \wedge (d\beta)^n - n\tau \wedge \alpha \wedge \beta \wedge (d\beta)^{n-1}.
\end{equation}
Equivalently,
\[
    f_{\alpha, \beta} = (n+1)\alpha(Z_\beta) - n \frac{\tau \wedge \alpha \wedge \beta \wedge (d\beta)^{n-1}}{\vol_\beta},
\]
where the quotient denotes the unique scalar relating the two top-degree forms.
\end{theorem}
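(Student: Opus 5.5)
The plan is to mimic the proof of Proposition \ref{prop:unimodular_theta}, replacing Lemma \ref{lem:unimodular_d} by Lemma \ref{lem:modular_d}. We take the same $2n$-form
\[
    \omega = \alpha \wedge \beta \wedge (d\beta)^{n-1}
\]
and compute $d\omega$ in two ways. First, the graded Leibniz rule gives
\[
    d\omega = d\alpha \wedge \beta \wedge (d\beta)^{n-1} - \alpha \wedge d\beta \wedge (d\beta)^{n-1}.
\]
Here $d(d\beta)^{n-1}=0$. Since $d\alpha$ is a $2$-form it commutes with $\beta$, so this equals $\beta \wedge d\alpha \wedge (d\beta)^{n-1} - \alpha \wedge (d\beta)^n$. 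Second, Lemma \ref{lem:modular_d} gives $d\omega = -\tau \wedge \alpha \wedge \beta \wedge (d\beta)^{n-1}$.

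Equating the two expressions isolates the mixed term:
\[
    \beta \wedge d\alpha \wedge (d\beta)^{n-1} = \alpha \wedge (d\beta)^n - \tau \wedge \alpha \wedge \beta \wedge (d\beta)^{n-1}.
\]
Multiplying by $n$ and substituting into the definition of $\Theta(\alpha,\beta)$ yields \eqref{eq:theta_modular} directly. Integrability of $\alpha$ is not needed for this identity itself; it only enters through the definition of $\Theta$ and $f_{\alpha,\beta}$.

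For the scalar form, we divide by $\vol_\beta$. It remains to check that $\alpha \wedge (d\beta)^n = \alpha(Z_\beta)\vol_\beta$. Both sides are top-degree forms. Contracting $\beta\wedge(\alpha\wedge(d\beta)^n)=0$ with $Z_\beta$ and using $i_{Z_\beta}d\beta=0$ gives
\[
    \alpha\wedge(d\beta)^n - \beta\wedge\alpha(Z_\beta)(d\beta)^n = 0,
\]
which is exactly the claim. This is the same step already used in Proposition \ref{prop:unimodular_theta}.

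The only real point of care is the sign bookkeeping. The sign in the Leibniz rule for the degree-one factor $\alpha$ must be tracked, as must the sign convention $d\omega=-\tau\wedge\omega$ from Lemma \ref{lem:modular_d}, which depends on the Chevalley--Eilenberg convention $d\alpha(X,Y)=-\alpha([X,Y])$. As a consistency check, we will verify that setting $\tau=0$ recovers Proposition \ref{prop:unimodular_theta}.
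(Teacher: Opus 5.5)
Your proposal is correct and is essentially the paper's proof. Like the paper, you apply Lemma \ref{lem:modular_d} to $\omega=\alpha\wedge\beta\wedge(d\beta)^{n-1}$, compare with the Leibniz expansion to isolate $\beta\wedge d\alpha\wedge(d\beta)^{n-1}$, substitute into $\Theta$, and divide by $\vol_\beta$. Your contraction argument for $\alpha\wedge(d\beta)^n=\alpha(Z_\beta)\vol_\beta$ spells out a step the paper only sketches in Proposition \ref{prop:unimodular_theta}, and you are right that integrability of $\alpha$ is not used in the identity itself.
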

\begin{proof}
Apply Lemma \ref{lem:modular_d} to $\omega = \alpha \wedge \beta \wedge (d\beta)^{n-1}$. By the Leibniz rule,
\[
    d\omega = \beta \wedge d\alpha \wedge (d\beta)^{n-1} - \alpha \wedge (d\beta)^n.
\]
Comparing this with $d\omega = -\tau \wedge \alpha \wedge \beta \wedge (d\beta)^{n-1}$ gives
\[
    \beta \wedge d\alpha \wedge (d\beta)^{n-1} = \alpha \wedge (d\beta)^n - \tau \wedge \alpha \wedge \beta \wedge (d\beta)^{n-1}.
\]
Substitute this identity into the definition of $\Theta$ to obtain \eqref{eq:theta_modular}. The scalar formula follows by dividing by $\vol_\beta$.
\end{proof}

\begin{remark}
In the non-unimodular case the interaction scalar is not, in general, determined solely by $\alpha(Z_\beta)$. The example below shows that the modular term can yield $f_{\alpha, \beta} > 0$ when $\alpha(Z_\beta) = 0$. We do not assert here that the modular term can always produce an obstruction when $\alpha(Z_\beta) > 0$; that stronger claim requires a separate explicit example.
\end{remark}

\section{Examples}

\subsection{The five-dimensional Heisenberg algebra}

\begin{example}
Let $\mathfrak{h}_5$ have basis $e_1, \dots, e_5$ and nonzero brackets
\[
    [e_1, e_3] = e_5, \quad [e_2, e_4] = e_5.
\]
For $\beta = e_5^*$, we have $d\beta = -e_1^* \wedge e_3^* - e_2^* \wedge e_4^*$ and $\beta \wedge (d\beta)^2 \neq 0$. The Reeb vector is $Z_\beta = e_5$. Since $\mathfrak{h}_5$ is nilpotent, every integrable $\alpha$ is closed and annihilates $[\mathfrak{h}_5, \mathfrak{h}_5] = \R e_5$. Thus $\alpha(Z_\beta) = 0$, and Theorem \ref{thm:nilpotent_deform} gives $f_{\alpha, \beta} = 0$. Every nonzero left-invariant foliation is therefore an affine origin for $\beta$.
\end{example}

\subsection{A unimodular solvable example}

\begin{example}[A five-dimensional solvable algebra]
Consider the Lie algebra with nonzero brackets
\[
    [e_1, e_4] = e_1, \quad [e_3, e_4] = -e_3, \quad [e_2, e_5] = e_2, \quad [e_3, e_5] = -e_3.
\]
It is unimodular. For $\beta = e_1^* + e_2^* + e_3^*$, direct calculation gives
\[
    d\beta = -e_1^* \wedge e_4^* - e_2^* \wedge e_5^* + e_3^* \wedge e_4^* + e_3^* \wedge e_5^*,
\]
\[
    Z_\beta = \frac{1}{3}(e_1 + e_2 + e_3), \quad \beta \wedge (d\beta)^2 \neq 0.
\]
Consequently, for any integrable $\alpha = \sum_{j=1}^5 a_j e_j^*$, Proposition \ref{prop:unimodular_theta} gives
\[
    f_{\alpha, \beta} = a_1 + a_2 + a_3.
\]
Thus, among the integrable forms, $\alpha$ is an affine origin precisely when $a_1 + a_2 + a_3 \geq 0$. The integrability hypothesis is essential: the inequality alone does not assert that an arbitrary $1$-form defines a foliation.
\end{example}

\subsection{A non-unimodular example with zero Reeb contraction}

\begin{example}[A positive modular contribution]
Let $\g = \mathfrak{aff}(\R) \oplus \R e_0$, with basis $e_0, e_1, e_2$, brackets $[e_1, e_2] = e_2$, and $e_0$ central. The modular character is $\tau = e_1^*$. Fix $s \neq 0$ and set
\[
    \beta = e_2^* + s e_0^*, \quad \alpha = e_2^*.
\]
Then $d\beta = -e_1^* \wedge e_2^*$ and $\beta \wedge d\beta = -s e_0^* \wedge e_1^* \wedge e_2^* \neq 0$, so $\beta$ is contact. Its Reeb vector is $Z_\beta = s^{-1}e_0$, and hence $\alpha(Z_\beta) = 0$. The form $\alpha$ is integrable because $\alpha \wedge d\alpha = 0$. For $n = 1$, formula \eqref{eq:theta_modular} yields
\[
    \Theta(\alpha, \beta) = -\tau \wedge \alpha \wedge \beta = -s e_1^* \wedge e_2^* \wedge e_0^* = \vol_\beta.
\]
Therefore $f_{\alpha, \beta} = 1 > 0$. By Theorem \ref{thm:criterion}, $\alpha$ is an affine origin even though $\alpha(Z_\beta) = 0$.
\end{example}

\begin{remark}
The preceding example demonstrates that the modular term can affect the sign criterion. It does not establish the stronger claim that there exists a non-unimodular contact Lie algebra and an integrable form with $\alpha(Z_\beta) > 0$ but $f_{\alpha, \beta} < 0$. We leave that question open in this paper rather than include an unverified example.
\end{remark}

\section{Lie groups and compact quotients}

A left-invariant form on a Lie algebra determines a left-invariant differential form on the corresponding connected simply connected Lie group. The algebraic affine criterion therefore applies pointwise to the resulting forms on the group.

If $\Gamma$ is a uniform lattice in a Lie group $G$, every left-invariant differential form on $G$ descends to the compact quotient $\Gamma \backslash G$. Conversely, such a compact quotient exists only if $G$ admits a uniform lattice.

\begin{theorem}[Compact solvmanifolds]
Let $M = \Gamma \backslash G$ be a compact quotient of a connected solvable Lie group, equipped with a left-invariant contact form induced by $\beta$. Then $G$ is unimodular. Consequently, for every left-invariant integrable form $\alpha$,
\[
    \alpha \in \mathcal{A}(\beta) \iff \alpha(Z_\beta) \geq 0.
\]
\end{theorem}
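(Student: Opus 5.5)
The plan has two parts: first show that $G$ is unimodular, then combine this with the algebraic results already established.

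For unimodularity, the standard fact is that a Lie group admitting a lattice is unimodular. I would argue it as follows. Since $\Gamma$ is discrete and $\Gamma\backslash G$ carries a finite $G$-invariant measure (compactness of the quotient, or more generally finite covolume), the modular function $\Delta_G$ must be trivial on $\Gamma$ and indeed on all of $G$. The identity $\Delta_G|_\Gamma=\Delta_\Gamma=1$ and the existence of a finite invariant measure on $\Gamma\backslash G$ force $\Delta_G\equiv 1$ (Raghunathan, Lemma~1.4).

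A more elementary route, in the spirit of the paper, is a Stokes argument. Take a nonzero left-invariant top form $\Omega$ on $G$; it descends to a volume form on $M$. For $X\in\g$, the left-invariant $2n$-form $i_X\Omega$ also descends, and Lemma~\ref{lem:modular_d} gives $d(i_X\Omega)=-\tau(X)\Omega$. Integrating over the closed manifold $M$ and applying Stokes yields
\[
0=-\tau(X)\int_M\Omega .
\]
Since $\int_M\Omega\neq 0$, we get $\tau(X)=0$ for all $X$, so $\g$ is unimodular. I would present this Stokes argument, because it uses only results already stated in the paper. The one point to justify is that left-invariant forms on $G$ are invariant under left translation by $\Gamma$ and hence descend to $\Gamma\backslash G$; this was noted in the preceding paragraph of the paper. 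With this argument solvability is not even needed.

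Once $\tau=0$, Proposition~\ref{prop:unimodular_theta} gives $f_{\alpha,\beta}=(n+1)\alpha(Z_\beta)$ for every integrable $\alpha$. Theorem~\ref{thm:criterion} then gives $\alpha\in\mathcal{A}(\beta)$ if and only if $f_{\alpha,\beta}\ge 0$, which is equivalent to $\alpha(Z_\beta)\ge 0$ because $n+1>0$.

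The only real content is the unimodularity step. There the main point of care is the sign and convention in $d(i_X\Omega)=\mathcal{L}_X\Omega$ under the Chevalley--Eilenberg identification. The sign does not matter, however, since we only need $\tau(X)\int_M\Omega=0$.
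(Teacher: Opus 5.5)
Your proof is correct. Its second half is exactly the paper's: unimodularity gives $f_{\alpha,\beta}=(n+1)\alpha(Z_\beta)$ by Proposition~\ref{prop:unimodular_theta}, and Theorem~\ref{thm:criterion} together with $n+1>0$ gives the equivalence.

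The difference is in the unimodularity step. The paper does not prove it. It cites the classical fact that a connected solvable Lie group admitting a uniform lattice is unimodular. You derive it instead:
\begin{itemize}
\item you descend the left-invariant forms $\Omega$ and $i_X\Omega$ to the closed manifold $M=\Gamma\backslash G$;
\item you apply Lemma~\ref{lem:modular_d} to get $d(i_X\Omega)=-\tau(X)\Omega$ there;
\item you integrate with Stokes' theorem, orienting $M$ by the descended $\Omega$ so that $\int_M\Omega>0$.
\end{itemize}

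What your route buys:
\begin{itemize}
\item It keeps the argument inside the paper's own machinery, namely the same lemma that drives the modular formula, rather than resting on an external citation.
\item It shows that neither solvability nor the contact form plays any role in this step. Only compactness of the quotient and the $\Gamma$-invariance of left-invariant forms are used, so the hypothesis ``solvable'' in the statement is superfluous.
\end{itemize}

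The cost is that Stokes' theorem needs $M$ compact. The Haar-measure argument you sketch as an alternative also covers non-uniform lattices of finite covolume, where Stokes is not directly available. That generality is not needed here.

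You are also right that the sign convention relating the Chevalley--Eilenberg and de Rham differentials is immaterial. The argument only uses the vanishing of $\tau(X)\int_M\Omega$.
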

\begin{proof}
A connected solvable Lie group admitting a uniform lattice is unimodular; see, for example, the classical results on lattices in solvable Lie groups cited in \cite{milnor1976curvatures}. Thus $\tau = 0$, and Proposition \ref{prop:unimodular_theta} together with Theorem \ref{thm:criterion} gives the equivalence.
\end{proof}

\begin{remark}
The last theorem concerns left-invariant forms on compact solvmanifolds. It does not imply that all affine deformation phenomena on non-unimodular Lie groups disappear; rather, a non-unimodular solvable Lie group cannot itself admit a uniform lattice.
\end{remark}

\end{document}